\documentclass[12pt]{article}

\usepackage{iftex}
\ifpdftex
    \usepackage{CJKutf8}
    \AtBeginDocument{\begin{CJK*}{UTF8}{gbsn}}
    \AtEndDocument{\end{CJK*}}
\else
    \usepackage[scheme=plain]{ctex}
    
    \newenvironment{CJK*}[2]{}{}
\fi

\usepackage{amsmath,amsthm,amssymb,amsfonts,mathtools,thmtools}
\usepackage[numbers,sort&compress]{natbib}
\usepackage{color,colordvi}
\usepackage{soul}
\usepackage{fullpage}

\usepackage[letterpaper,top=2cm,bottom=2cm,left=3cm,right=3cm,marginparwidth=2cm]{geometry}

\usepackage[colorlinks=true, allcolors=blue]{hyperref}
\usepackage[capitalise]{cleveref}
\usepackage{standalone}
\usepackage{indentfirst}
\usepackage[affil-it]{authblk}
\usepackage{diagbox}
\usepackage{changes}
\usepackage{enumitem}
\usepackage{bbm}
\usepackage{orcidlink}

\newtheorem{theorem}{Theorem}[section]
\newtheorem{lemma}[theorem]{Lemma}

\newtheorem{proposition}[theorem]{Proposition}
\newtheorem{claim}[theorem]{Claim}

\newtheorem{problem}[theorem]{Problem}

\newtheorem{observation}[theorem]{Observation}

\newtheorem*{claim*}{Claim}

\newcommand{\textotherwise}{\text{otherwise}}

\newcommand{\QQ}{\mathbb{Q}}
\newcommand{\PP}{\mathbb{P}}

\newcommand{\ZZ}{\mathbb{Z}}

\newcommand{\cG}{\mathcal{G}}
\newcommand{\cQ}{\mathcal{Q}}

\DeclareMathOperator{\diag}{diag}

\DeclareMathOperator{\CAN}{CAN}
\DeclareMathOperator{\sCAN}{semi-CAN}
\DeclareMathOperator{\FRI}{FRI}
\DeclareMathOperator{\FCI}{FCI}
\DeclareMathOperator{\IRI}{IRI}
\DeclareMathOperator{\ICI}{ICI}

\DeclarePairedDelimiter{\abs}{\lvert}{\rvert}
\DeclarePairedDelimiter{\card}{\lvert}{\rvert}
\DeclarePairedDelimiter{\set}{\lbrace}{\rbrace}

\DeclarePairedDelimiter{\paren}{\lparen}{\rparen}

\DeclarePairedDelimiter{\floor}{\lfloor}{\rfloor}
\DeclarePairedDelimiter{\ceil}{\lceil}{\rceil}

\title{Almost all graphs have no cospectral mates with height relative small to its order}

\author{Da Zhao\footnote{email: zhaoda@ecust.edu.cn}~\orcidlink{0000-0002-9582-0778}}
\affil{School of Mathematics, East China University of Science and Technology, 130 Meilong Road, Shanghai 200237, China.}

\date{}

\begin{document}

\maketitle

\begin{abstract} 
    The main result of this paper shows that almost all graphs of order $n$ have no cospectral mates with height $o(( n / \ln n)^{1/10})$, improving an earlier result on cospectral mates with fixed level and covering the cospectral graphs obtained by GM-switching of relative small blocks. 
\end{abstract}

Keywords: graph spectrum, graph identification, random graph, random matrix, fingerprint, inverse spectral problem

Mathematics Subject Classification: 05C50, 05C60, 05C80, 35P99


\section{Introduction}

In 1966, Kac~\cite{kac1966CanOneHeara} asked whether the Laplacian eigenvalues characterize a plane region uniquely in his widespread paper 'Can one hear the shape of a drum?', and he attributed this question to Bochner. 
Gordon--Webb--Wolpert~\cite{gordon1992OneCannotHear} answered the question in the negative in 1992, namely one can not hear the shape of a drum. 
Note that Milnor~\cite{milnor1964EigenvaluesLaplaceOperator} constructed two non-isometric 16-dimensional flat tori that are isospectral in 1964.

A similar question was considered in the discrete setting from a different origin. 
In 1956, during the study of Hückel molecular orbital (HMO) theory, chemists Günthard and Primas~\cite{gunthard1956ZusammenhangGraphentheorieUnd} asked whether the adjacency spectrum of a graph characterizes it up to isomorphism. 
Collatz and Sinogowitz~\cite{voncollatz1957SpektrenEndlicherGrafen} provided counterexamples in the next year, though it is not clear to the author whether their motivation is to answer the question by Günthard and Primas. 

However, the occurrences of the above phenomenon might be rare. 
In 1979, another paper by Wolpert~\cite{wolpert1979LengthSpectraModuli} showed that the eigenvalues of the Laplacian operator do characterize a domain in the generic case, where it reads `Consider a compact Riemann surface $S$ of genus $g$, $g > 2$ endowed with the Poincare metric. I.M. Gel'fand posed the following question. Do the
eigenvalues of the Laplace Beltrami operator determine the geometry of $S$?'. 
We do not find a reference indicating when and where Gel'fand posed such a question. 

In 2003, van Dam and Haemers~\cite{vandam2003WhichGraphsAre} wrote 'whether it is possible to define the matrix of $G$ in a (not so sensible) way such that every graph becomes DS.', where 'DS' means 'determined by spectrum'. 
They also wrote that 'it is more likely that almost all graphs are DS, than that almost all graphs are non-DS'. 
Their subsequent paper~\cite{vandam2009DevelopmentsSpectralCharacterizations} says that the work by Wang and Xu~\cite{wang2006SufficientConditionFamilya,wang2006ExcludingAlgorithmTesting,wang2007NoteGeneralizedSpectral} `strengthens our belief that the statement ``almost all graphs are not DS'' (which is true for trees) is false.'.
They did not use the word conjecture in the paper. 
Haemers used it in a number of conference talks. 
The first paper mentioning Haemers conjecture might be~\cite{brouwer2009CospectralGraphs12} by Brouwer and Spence.

A similar question was considered in random matrix theory as well. 
Vu~\cite{jang2014InvitedLectures,vu2021RecentProgressCombinatorial} raised the fingerprint conjecture in ICM 2014, namely almost all symmetric $\pm 1$ matrices are determined by their spectrum up to switching. 

There have been many studies on cospectral graphs and graphs determined by spectrum (DS graphs). 
The reader may refer to the surveys~\cite{vandam2003WhichGraphsAre,vandam2009DevelopmentsSpectralCharacterizations}.
The phenomenon on trees is the opposite, namely almost all trees have cospectral mates~\cite{harary1973NewDirectionsTheory}, proved by Schwenk. 
Recently Wang and Huang~\cite{wang2025AlmostAllCographs} proved that almost all cographs have a cospectral mate.
Van de Berg and Van Werde~\cite{vandeberg2026AreSparseGraphs} proved that the largest component in a random sparse graph admits a cospectral mate almost surely.
The phenomenon that the edge density of a graph affects the probability of existence of cospectral mates was observed in~\cite{brouwer2009CospectralGraphs12}.
Current best lower bound on the number of DS graphs is given by Koval and Kwan~\cite{koval2024ExponentiallyManyGraphs}, they proved that there are at least $\exp(cn)$ graphs with $n$ vertices are DS for some $c > 0$, improving an earlier bound of order $\exp(c \sqrt{n})$. 
On the other hand, Haemers and Spence~\cite{haemers2004EnumerationCospectralGraphs} proved that there are at least $(\frac{1}{24} - o(1)) n^3 g_{n-1}$ non-DS graphs with $n$ vertices, where $g_n$ is the number of non-isomorphic graphs with $n$ vertices. 
However, the fraction of known non-DS graphs and the fraction of known DS graphs both tend to $0$ as the order of the graph goes to infinity. 

Given a graph $G$, we denote by $A_G$ its adjacency matrix. 
Two graphs $G$ and $H$ are called cospectral mates if $G$ and $H$ are not isomorphic and the eigenvalues of $A_G$ and $A_H$ are identical.
In other words, there exists an orthogonal matrix $Q$ such that $Q^\top A_G Q = A_H$. 
Two graphs $G$ and $H$ are called cospectral mates of level $\ell \in \ZZ$ if $G$ and $H$ are not isomorphic and there exists a rational orthogonal matrix $Q$ such that $Q^\top A_G Q = A_H$ and $\ell Q$ is integral. 
Two graphs $G$ and $H$ are called generalized cospectral mates if $G$ and $H$ are not isomorphic and the eigenvalues of $A_G$ and $A_H$ are identical as well as the eigenvalues of the complement graphs $\overline{G}$ and $\overline{H}$.
Such generalized spectrum was considered by Tutte~\cite{MR538033}, which was later named as idiosyncratic polynomial of graphs. 

Recently Wei Wang and the author~\cite{wang2025AlmostAllGraphs} showed that almost all graphs in $\cG(n, p)$ with $0<p<1$ have no cospectral mates of level $\ell$ for every fixed level $\ell \geq 2$, generalizing an earlier result~\cite{wang2010AsymptoticBehaviorGraphs} stating that almost all graphs in $\cG(n, 1/2)$ have no generalized cospectral mates of level $2$. 
It is known that almost all graphs are controllable~\cite{orourke2016ConjectureGodsilConcerning}.
A sufficient condition that almost all graphs are determined by their generalized spectrum is
\begin{align}\label{eq:across}
    &\Pr(G \text{ is not controllable}) \nonumber\\
    + &\sum_{\ell = 2}^{n^{n^2}} \Pr(G \text{ has a generalized cospectral mate with level } \ell) \to 0
\end{align}
as $n \to \infty$. 
Their proof certifies that every summand on the left-hand side tends to $0$. 
Note that the upper bound $n^{n^2}$ of the level in~\cref{eq:across} is far from optimal. 
The problem that what is the true upper bound of the level is interesting in its own right.
Van Werde~\cite{werde2026ExactCospectralityProbabilities} generalized their result and computed the exact cospectral probability for uniform random matrices over an algebraic field. 

Another approach focuses on the generalization of spectrum to spectrum of other matrices. 
Wei Wang and the author~\cite{wang2026GraphIsomorphismMultivariate} introduced the multivariate graph spectrum, which covers the adjacency spectrum, the Laplacian spectrum, the generalized spectrum, etc.
They related the problem to the graph isomorphism problem, where tools such as Weisfeiler--Lehman algorithm and coherent configuration (association scheme) are well studied. 
Naturally they considered the generalized block Laplacian spectrum and exhibited evidence that such spectrum characterizes a graph almost surely. 
Later Xu and the author~\cite{xu2025GeneralizedBlockDiagonal} reduced the generalized block Laplacian spectrum to generalized block diagonal Laplacian spectrum, where the matrix is symmetric and the eigenvalues are real. 
Wang and Wu~\cite{wang2025GeneralizationJohnsonNewmanTheorem} further generalized to perturbations with multiple rank-one matrices. 
Independently, Xiang~\cite{xiang2026NaturalGraphSpectra} showed that there exists a natural graph spectrum determining almost all graphs, which answers the question raised by van Dam and Haemers by showing the existence of a sensible matrix. 

The main result of this paper is an improvement of~\cite{wang2025AlmostAllGraphs} via replacing level by height. 
Given a rational matrix $M$, the height of $M$ is the largest positive integer among the denominators of all entries in $M$. 
Clearly a rational matrix of level $\ell$ is of height at most $\ell$. 
The motivation of introducing height comes from the following fact. 
A lot of known cospectral graphs are obtained by Godsil--McKay switching and its generalizations, Wang--Qiu--Hu switching, Abiad--Haemers switching, and design switching~\cite{godsil1982ConstructingCospectralGraphs,wang2019CospectralGraphsGMswitching,abiad2024SwitchingMethodsLevel,ihringer2025DesignSwitchingGraphsa}. 
Consider a GM-switching with block sizes $2p_1, 2p_2, \ldots, 2p_t$, where these $p_i$'s are distinct prime numbers. 
Then the level of the switching matrix grows pretty fast while its height is still bounded by $n$. 
The use of height also relieves us from the summation in~\cref{eq:across}.

\begin{theorem}\label{thm:fixed-height}
    Almost all graphs have no cospectral mates of fixed height $h \geq 2$. 
    Namely, let $n$ be a positive integer, $0 < p < 1$, and $G \sim \cG(n, p)$ a random Erdős--Rényi graph, then
    \begin{align}
        \Pr(G \text{ has a cospectral mate with height} \leq h) \to 0
    \end{align}
    as $n \to \infty$ for every fixed $h \geq 2$.
\end{theorem}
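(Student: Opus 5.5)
The plan is to reduce the height condition to a level condition and then invoke the fixed-level result of~\cite{wang2025AlmostAllGraphs}. Suppose $G$ has a cospectral mate $H$ via a rational orthogonal matrix $Q$ with $Q^\top A_G Q = A_H$ and height at most $h$. Every entry of $Q$ has denominator in $\set{1,\dots,h}$, so every denominator divides $L_h := \operatorname{lcm}(1,2,\dots,h)$. Hence $L_h Q$ is integral, and $Q$ has some level $\ell$ dividing $L_h$; in particular $\ell \le L_h$, which is a constant independent of $n$. If $\ell = 1$, then $Q$ is an integral orthogonal matrix, i.e.\ a signed permutation matrix. Since $A_G$ and $A_H$ are nonnegative $0/1$ matrices with zero diagonal, one should check that this forces $H \cong G$. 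This case is excluded by the definition of a cospectral mate. (Caveat: in general a signed permutation conjugation flips signs of entries, so the cleaner route is to note that the absolute values also give an isomorphism. Indeed, $(Q^\top A_G Q)_{ij} = \pm (A_G)_{\sigma(i)\sigma(j)}$, and nonnegativity of $A_H$ forces the sign to be $+$ whenever the entry is nonzero.)

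Therefore the event in the statement is contained in the union over $2 \le \ell \le L_h$ of the events ``$G$ has a cospectral mate of level $\ell$''. By the union bound,
\begin{align*}
\Pr(G \text{ has a cospectral mate with height} \le h) \le \sum_{\ell=2}^{L_h} \Pr(G \text{ has a cospectral mate of level } \ell).
\end{align*}
This is a sum of a fixed finite number of terms, independent of $n$. The result of~\cite{wang2025AlmostAllGraphs} states that for $G\sim\cG(n,p)$ with fixed $0<p<1$ and each fixed $\ell\ge 2$, each summand tends to $0$, so the whole sum tends to $0$.

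The main obstacle is matching hypotheses precisely. First, one must confirm that the cited result concerns ordinary (not generalized) cospectral mates of level exactly $\ell$ with rational orthogonal $Q$. If instead it is stated for level ``dividing'' or ``at most'' $\ell$, the reduction is even easier: apply it once with $\ell = L_h$. Second, one must confirm that it holds for each fixed $p$ rather than only $p = 1/2$. If the cited theorem were only available for generalized cospectrality, I would instead redo its argument: the rational orthogonal $Q$ of bounded level reduces modulo a prime $q \mid \ell$ to a low-rank structure ($Q^\top Q = I$ with $\ell Q$ integral gives $(\ell Q)^\top(\ell Q) \equiv 0 \pmod{q}$). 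I would then bound, via a Littlewood--Offord/counting argument over $\mathbb{F}_q$, the probability that $A_G$ admits such a $Q$. That bound would be uniform over the finitely many primes $q \le h$.
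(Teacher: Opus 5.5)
Your reduction is correct, and it takes a different route from the paper's. Every denominator of a height-$\le h$ matrix divides $L_h=\operatorname{lcm}(1,\dots,h)$, so $L_hQ$ is integral. Under the paper's own definition of level (existence of a rational orthogonal $Q$ with $\ell Q$ integral), $H$ is then already a cospectral mate of $G$ of level $L_h$. A single application of the fixed-level theorem of \cite{wang2025AlmostAllGraphs} with $\ell=L_h$ finishes the proof; your union over $2\le \ell\le L_h$ is harmless but unnecessary. Your hypothesis-matching worries are settled by the paper's introduction, which describes that result as covering ordinary (not generalized) cospectral mates in $\cG(n,p)$ for every $0<p<1$ and every fixed $\ell\ge 2$. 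So your mod-$q$ fallback is not needed. Your treatment of the signed-permutation case is also fine.

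The paper instead gives a direct, self-contained argument in four steps:
\begin{enumerate}
\item It reduces $Q$ to a canonical form $\diag(Q_s,I_{n-s})$, at a cost of at most $n^{2s}$ permutation choices.
\item It uses that each column of a height-$h$ rational orthogonal matrix has at most $h^2$ nonzero entries. This lets it pick index sets $I,J$ with $\card{I}\ge s/(2h^4)$ and $\card{J}\ge s/(2h^4)+n-s-1$ whose supports are pairwise disjoint. The entries $b_{i,j}$ ($i\in I$, $j\in J$) of $Q^\top AQ$ are then independent, and flipping one edge shows each is integral with probability at most $\hat p$.
\item It counts canonical forms by $(2sh^2)^{sh^2}$.
\item It takes a union bound.
\end{enumerate}

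Your route buys brevity: it shows that \cref{thm:fixed-height} is an immediate corollary of the earlier fixed-level result. It gives no quantitative control in $h$, though. Since $L_h=e^{(1+o(1))h}$ and the cited theorem is only for fixed $\ell$, your argument cannot reach \cref{thm:growing-height}. The paper's explicit bound $\sum_{s}\exp\bigl(2s\ln n+sh^2\ln(2sh^2)-\lambda s(n-1)/(4h^8)\bigr)$ depends only polynomially on $h$. That is what yields the growing-height regime $h=o\bigl((n/\ln n)^{1/10}\bigr)$, which is the real new content of the paper.
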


In fact, the height $h$ could grow with $n$. 

\begin{theorem}\label{thm:growing-height}
    Almost all graphs of order $n$ have no cospectral mates of height $h = o\paren*{(n/\ln n)^{1/10}}$. 
    Namely, let $n$ be a positive integer, $0 < p < 1$, $G \sim \cG(n, p)$ a random graph, and $h = o\paren*{(n/\ln n)^{1/10}}$, then
    \begin{align}
        \lim_{n \to \infty} \Pr(G \text{ has a cospectral mate with height} \leq h) = 0. 
    \end{align}
\end{theorem}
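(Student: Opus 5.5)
We need a plan for proving Theorem \ref{thm:growing-height}.

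The plan is to reduce to a prime-by-prime statement about kernels of $A_G$ over finite fields, in the spirit of the level argument of~\cite{wang2025AlmostAllGraphs}, with the level replaced by the height. Suppose $H$ is a cospectral mate of $G$ with $Q^\top A_G Q = A_H$, where $Q$ is rational orthogonal of height at most $h$. Since $G \not\cong H$, $Q$ is not a permutation matrix. We may also assume $G$ is controllable, which happens almost surely by~\cite{orourke2016ConjectureGodsilConcerning}. Then $Q$ is unique, and it fixes the all-ones vector $\mathbf 1$ up to sign when the mate is generalized. For the plain adjacency spectrum we instead work with the pair $(A_G, J)$, or directly with $Q$.

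First we extract a prime. Let $\ell$ be the level of $Q$, i.e.\ the lcm of its denominators. Some prime $p \mid \ell$ satisfies $p \le h$, because every denominator is at most $h$. Let $q = p^{a}$ be the exact power of $p$ dividing $\ell$. Then $p\le h$ and $p^a \le h$, since $p^a$ divides some single denominator.

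Next, consider $\bar Q = (\ell/p) Q \bmod p$, which is nonzero. From $Q^\top Q = I$ and $A_G Q = Q A_H$ we get $\bar Q^\top \bar Q \equiv 0$ and $A_G \bar Q \equiv \bar Q A_H \pmod p$. So every column $v$ of $\bar Q$ is a nonzero isotropic vector over $\mathbb F_p$ with $v^\top v=0$. Moreover, the column space of $\bar Q$ is an $A_G$-invariant totally isotropic subspace. The key extra information from small height is sparsity-like structure: each entry of $Q$ has denominator at most $h$, and $\sum_j Q_{ij}^2 = 1$. Hence each row, and each column, has at most $h^2$ nonzero entries. So $v$ has support size $s\le h^2$.

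The probabilistic step is a union bound. We bound the probability that there exist a prime $p\le h$, a nonzero $v\in\mathbb F_p^n$ with support at most $h^2$, and a structured vector $w$ (a column of $\bar Q$, with the support bound from $H$'s side), such that $A_G v \equiv$ a combination of the same bounded-size family. Concretely, I would use the invariant-subspace relation $A_G \bar Q = \bar Q A_H$. Since $A_H$ has entries in $\{0,1\}$ and $\bar Q$ has columns of support at most $h^2$, the vector $A_G v$ lies in a span determined by at most $h^2$ columns of $\bar Q$. Its support is therefore confined to at most $h^4$ coordinates. For fixed $v$ with support $S$, $|S|=s\ge 2$, the entries $(A_Gv)_i$ for $i\notin S$ are independent. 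Each is nonzero mod $p$ with probability bounded below by a constant $c(p_0)>0$ depending only on the edge probability, by Littlewood–Offord/Halász over $\mathbb F_p$ with at least one nonzero coefficient. Requiring at least $n-s-h^4$ of them to vanish costs $(1-c)^{n-O(h^4)}$. The number of choices is at most $h\cdot n^{h^2}p^{h^2}\cdot \binom{n}{h^4}$, roughly $\exp(O(h^4\ln n))$. The sum is then $\exp(O(h^4 \ln n) - cn)$, which tends to 0 whenever $h^4\ln n = o(n)$. The case $s=1$ is excluded, because then $v^\top v\ne 0$.

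The main obstacle is controlling the counting, so that the number of candidate configurations of $(\bar Q, A_H)$ restricted to the relevant rows is $\exp(o(n))$. This must hold uniformly over the growing prime $p$. The constant $c$ must also be uniform in $p$, which is fine since a single nonzero coefficient gives probability at least $\min(p_0,1-p_0)$ of nonvanishing. The worse exponent $1/10$ in the statement suggests the paper loses more, e.g.\ through a per-column treatment of the invariant subspace or through a bound of type $h^{10}\ln n = o(n)$. I would try the column-support argument above first. If the tracking of $A_H$ breaks independence, I would fall back to a coarser counting over supports of size $h^{2}$ across $h^{O(1)}$ columns, accepting an exponent like $h^{10}$.
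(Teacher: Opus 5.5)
Your proposal has a genuine gap: the claim that $A_G v$ is supported on at most $h^4$ coordinates is false. The relation $A_G\bar Q\equiv \bar Q A_H \pmod p$ gives, column by column, $A_G v\equiv \bar Q a_j$, where $a_j$ is the $j$-th column of $A_H$. That column has $\deg_H(j)$ ones, and for a graph cospectral with $G$ this is typically of order $n$, not at most $h^2$. So $A_G v$ is a combination of $\Theta(n)$ sparse columns of $\bar Q$. All the sparsity you actually get is that $A_G v$ vanishes outside the set $R$ of nonzero rows of $\bar Q$. (Here $\bar Q$ should be $\ell Q \bmod p$, not $(\ell/p)Q \bmod p$; it kills all integral rows.)

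The set $R$ can be comparable to $n$. Take a Godsil--McKay switching with $t$ disjoint switching sets $C_1,\dots,C_t$ of size $2m$, $m$ prime. The height is $m$, $\bar Q\equiv \diag(J_{2m},\dots,J_{2m},0)\pmod m$, and $A_G\mathbf 1_{C_k}$ is in general nonzero mod $m$ on all of $C_1\cup\dots\cup C_t$, a set of size $2mt$. For such $Q$ your single-column event has probability at best about $\hat p^{\,n-|R|-|S|}$. This gives nothing when $|R|$ is close to $n$. When $|R|$ is a constant fraction of $n$, it cannot pay for the union over $R$ (already $\binom{n}{|R|}$ choices). More generally, the number of candidate $Q$ whose nontrivial block has size $s$ is at least $\exp(c\,s\ln n)$, already from choosing the switching sets.

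So the ``$\exp(o(n))$ counting'' you name as the main obstacle cannot be achieved. Your fallback with $h^{O(1)}$ columns has the same defect: any bound decaying like $\exp(-cn)$ uniformly in the block size loses to this count. As a sanity check, your sparsity claim would have yielded $h=o((n/\ln n)^{1/4})$, which is better than the paper's result.

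The missing idea is a probability bound whose exponent grows linearly in the size $s$ of the nontrivial block. The paper does not reduce mod a prime.
\begin{itemize}
\item It puts $Q$ in the canonical form $\diag(Q_s,I_{n-s})$, paying at most $n^{2s}$ for the row and column permutations.
\item Each column support $K(i)$ meets at most $h^4$ others. A greedy selection therefore gives $I\subseteq\{1,\dots,s\}$ and $J\supseteq\{s+1,\dots,n\}$ with $|I|\ge s/(2h^4)$, $|J|\ge n-s+s/(2h^4)-1$, and pairwise disjoint supports.
\item The entries $b_{ij}=q_i^\top A q_j$ for $i\in I$, $j\in J$ then depend on disjoint sets of edges, so they are independent.
\item Each $b_{ij}$ is an integer with probability at most $\hat p$, by flipping one edge in $K(i)\times K(j)$.
\end{itemize}
This gives $\Pr(Q^\top A Q\in M_n(\ZZ))\le \hat p^{|I||J|}$ with $|I||J|$ of order at least $sn/h^8$. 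That beats the count $n^{2s}(2sh^2)^{sh^2}$ exactly when $h^{10}\ln n=o(n)$, which is where the exponent $1/10$ comes from.

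Your row-vanishing condition corresponds, for a single $i$, to the columns $j>s$ in $J$. What you are missing is using $\Omega(s/h^4)$ block columns simultaneously, together with the pairs inside the block.
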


We give the proofs in the subsequent two sections. 
Then we propose problems for future research in the discussion section.

\section{Proof of~\cref{thm:fixed-height}}

A \emph{graph} is a tuple $G = (V, E)$, where $V$ is a finite \emph{vertex} set and $E \subseteq \binom{V}{2}$ is the \emph{edge} set. 
An edge $\set{u, v} \in E$ is often denoted as $uv$ or $u \sim v$ if the graph is clear from context. 
The \emph{adjacency matrix} of $G$ is defined by $A = A_G = (a_{u, v})_{u, v \in V}$, where
\begin{align}
    a_{u,v} = 
    \begin{cases}
        1, & u \sim v \text{ is an edge}, \\
        0, & \textotherwise.
    \end{cases}
\end{align}
The \emph{spectrum} of a graph $G$ is the multiset of all eigenvalues of $A_G$. 

Let $S_n(\ZZ)$ be the set of symmetric matrices of order $n$ over $\ZZ$;
let $M_n(\ZZ)$ be the set of square matrices of order $n$ over $\ZZ$; 
let $O_n(\QQ)$ be the set of orthogonal matrix of order $n$ over $\QQ$. 
Given a rational matrix $M$, the \emph{height} $h(M)$ of $M$ is the largest denominator $h$ among all entries of $M$. 
For every positive integer $h$, let $O_{n, h}(\QQ)$ be the set of rational orthogonal matrices with height $h$. 
Note that the rational orthogonal matrices with height $1$ are exactly the signed permutation matrices. 

Two graphs $G_1 = (V_1, E_1)$ and $G_2 = (V_2, E_2)$ are called \emph{isomorphic} if there exists a bijection $\pi : V_1 \to V_2$ such that $uv \in E_1$ if and only if $\pi(u)\pi(v) \in E_2$. 
Two non-isomorphic graphs are called \emph{cospectral} if their spectra are identical; and we say that one graph is a \emph{cospectral mate} of the other. 
In particular, this implies that there exists an orthogonal matrix $Q$ such that $Q^\top A Q = B$, where $A$ and $B$ are the adjacency matrices of the two graphs. 
In general the orthogonal matrix $Q$ may not be rational. 
In the case that $Q$ is a rational orthogonal matrix with height $h$, we say that one graph is a cospcetral mate of the other with height $h$. 

We consider a random graph model $\cG(n, p)$. 
Namely, $G \sim \cG(n, p)$ is a graph on $n$ vertices with each edge chosen independently with probability $p \in (0, 1)$. 
For convenience, we denote by $\hat{p} = \max\set{p, 1-p}$ in the rest of the paper. 

The proof of~\cref{thm:fixed-height} is divided into four parts. 
Firstly, we reduce the rational orthogonal matrix to a canonical form. 
Secondly, for a fixed $Q$ of canonical form, we estimate the probability that $\Pr(Q^\top A Q \in M_n(\ZZ))$. 
Thirdly, we estimate the number of $Q$ in a canonical form. 
Lastly, we combine the estimates to prove the main theorem. 

For $Q \in O_{n, h}(\QQ)$, there exists a pair of signed permutations matrices $(P_R, P_C)$ such that $P_R^\top Q P_C$ is of the block diagonal form
\begin{align}
    \begin{bmatrix}
        Q_s & \\
         & I_{n-s}
    \end{bmatrix},
\end{align}
where $Q_s$ is a rational orthogonal matrix of order $s$ and height $h$ with entries being zero or fractions.  
We call it a \emph{canonical form} of $Q$. 
Note that the canonical form a rational orthogonal matrix is not unique. 
We denote by $\CAN(n, h; s)$ the subset of $O_{n, h}(\QQ)$ of block diagonal form $\diag(Q_s, I_{n-s})$, and by $\sCAN(n, h; s)$ the subset of $O_{n, h}(\QQ)$ of block diagonal form $\diag(Q_s, P_{n-s})$, where $Q_s$ is a rational orthogonal matrix of order $s$ with entries being zero or fractions, and $P_{n-s}$ is a signed permutation matrix of order $n-s$. 

\begin{lemma}\label{lem:bottle}
    Fix $Q \in \CAN(n, h; s)$. 
    Let $A$ be the adjacency matrix of a random graph $G \sim \cG(n, p)$. 
    Then
    \begin{align}
        \Pr(Q^\top A Q \in M_n(\ZZ)) \leq \hat{p}^{\frac{s}{2 h^4} \cdot (\frac{s}{2h^4} + n - s - 1)}.
    \end{align}
\end{lemma}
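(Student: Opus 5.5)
We use a standard anti-concentration scheme: find many linear conditions on $A$, each involving a fresh edge variable, each violated with probability at least $1-\hat p$ given the variables already exposed. The exponent $\frac{s}{2h^4}(\frac{s}{2h^4}+n-s-1)$ counts the pairs $\{u,v\}$ with $u$ in a set $U$ of $m = s/(2h^4)$ columns of $Q_s$ and $v\neq u$ either in $U$ or outside the first $s$ coordinates. This is at most $\binom{m}{2} + m(n-s) \ge m(m+n-s-1)/2$; the precise counting (possibly using ordered pairs or a slightly different set) is a bookkeeping step to be settled against the target exponent.

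\textbf{Step 1 (column structure).} Every column $q$ of $Q_s$ is a rational unit vector whose nonzero entries all have denominators $>1$ and $\le h$. Since $\sum q_i^2=1$, each entry has $|q_i|\ge 1/h$, so $q$ has at most $h^2$ nonzero entries. Symmetrically, each row of $Q_s$ has at most $h^2$ nonzero entries. Hence column supports meet at most $h^4$ other column supports (the same bound holds via rows). Greedily select a set $U$ of at least $s/(2h^4)$ columns such that for each $u\in U$ there is a row index $r(u)$ in the support of $q_u$ that lies in no other selected column's support. Such a selection exists because the conflict graph has bounded degree, and the factor $2$ absorbs the loss.

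\textbf{Step 2 (entries outside the block).} For $u\in U$ and $v>s$, the $(u,v)$ entry of $Q^\top A Q$ equals $\sum_i q_{iu} a_{i v}$. Write $q_{r(u)u}=a/b$ in lowest terms with $b\ge 2$. Condition on all $a_{iv}$ with $i\neq r(u)$. Then integrality forces $a_{r(u)v}$ into a single residue class, so integrality fails for at least one of the two values $\{0,1\}$. The conditional probability of integrality is therefore at most $\hat p$. The variables $a_{r(u)v}$ are distinct for distinct pairs $(u,v)$, since the indices $r(u)$ are distinct and $v>s\ge r(u)$. This yields $m(n-s)$ independent factors of $\hat p$.

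\textbf{Step 3 (entries inside $U$).} For $u\neq u'$ in $U$, the entry $(q_u^\top A q_{u'})$ is a bilinear form. It contains the term $a_{r(u)r(u')}\bigl(q_{r(u)u}q_{r(u')u'}+q_{r(u')u}q_{r(u)u'}\bigr)$. By the choice of $r$, we have $q_{r(u')u}=0$, so the coefficient reduces to $q_{r(u)u}q_{r(u')u'}$. Here one must handle the case where the product of the two fractions is integral.

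\textbf{Main obstacle.} This integrality issue is the real difficulty. The plan is to select $r(u)$ so that $q_{r(u)u}$ has denominator divisible by a prime $\pi_u$. Then we either restrict $U$ further (by pigeonhole on primes this costs at most a constant factor) or use a single fixed prime $\pi$ dividing some denominator. If no workable choice exists, we fall back on the Step 2 bound only, which already gives $m(n-s)$. In that case the diagonal contribution $m(m-1)/2$ would have to come from instead using distinct pairs $(r(u),r(u'))$ conditioned sequentially. Chaining all these events in a fixed order, each conditional on the previously exposed edges, multiplies the bounds and gives the lemma.
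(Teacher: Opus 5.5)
You have the right skeleton: columns $U$ with private rows $r(u)$, one private edge per entry, and sequential conditioning. Step 2 is correct. As written, though, the proposal does not prove the lemma, for two concrete reasons.

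\textbf{The Step 3 ``obstacle'' does not exist, and your remedies for it would not work.} Every nonzero entry of $Q_s$ is a non-integer of absolute value at most $1$, since its column is a unit vector. Hence $0<|q_{r(u)u}\,q_{r(u')u'}|<1$, so the coefficient of $a_{r(u)r(u')}$ is never an integer. Flipping that single edge therefore always switches integrality, exactly as in Step 2. This magnitude observation is the heart of the paper's argument ($0<|q_{u_0,i}|<1$, $0<|q_{v_0,j}|\le 1$); no denominator or prime considerations are needed. Your alternatives fail:
\begin{itemize}
\item Restricting $U$ to a common prime loses a factor depending on $h$ (up to the number of primes below $h$), not a constant.
\item Falling back on Step 2 discards the quadratic part of the exponent, and for $s=n$ it gives nothing at all.
\end{itemize}
You should also justify the chaining explicitly. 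Each private variable $a_{r(u)v}$ ($v>s$) or $a_{r(u)r(u')}$ occurs in exactly one of your events, because $r(u)$ lies in no other selected support. So conditioning on all other edges gives a factor $\hat p$ per event.

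\textbf{The exponent does not come out with your parameters.} With $|U|=m=s/(2h^4)$ you have $\binom{m}{2}+m(n-s)$ events. This is strictly fewer than the target $m(m-1)+m(n-s)$ once $m\ge 2$. Ordered pairs cannot help, since $(u,u')$ and $(u',u)$ give the same entry of the symmetric matrix $Q^\top AQ$. The repair is that the greedy selection gives more than you claim. Each column support meets at most $h^4$ column supports, its own included, so you can take $m\ge s/h^4$, even with pairwise disjoint supports. Then $\binom{m}{2}+m(n-s)\ge \frac{s}{2h^4}\bigl(\frac{s}{2h^4}+n-s-1\bigr)$.

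\textbf{Comparison with the paper.} The paper avoids sequential conditioning altogether. It takes disjoint supports, splits the greedily chosen columns alternately into $I$ and $J$, adjoins $s+1,\dots,n$ to $J$, and uses only the $|I|\cdot|J|$ entries $b_{i,j}$. These are functions of pairwise disjoint edge sets and hence exactly independent; the halving into $I$ and $J$ is where $\frac{s}{2h^4}$ comes from. Your repaired route needs the conditioning argument, but it uses all pairs inside $U$ and yields roughly twice that exponent.
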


\begin{proof}
    
    Fix $Q \in \CAN(n, h; s)$. 
    For a column index $j$ of $Q$, we define $K(j)$ to be the set of row indices of non-zero entries of the $j$-th column of $Q$. 
    Next we shall choose two subsets of indices $I \subseteq \set{1,2, \ldots, s}$ and $J \subseteq \set{1,2, \ldots, n}$ such that the following hold.
    \begin{enumerate}
        \item It holds $K(i) \cap K(j) = \emptyset$ for $i, j \in I \cup J, i \neq j$. 
        \item The Cartesian products $K(i) \times K(j), i \in I, j \in J$ are disjoint subsets of $\set{1,2, \ldots, s} \times \set{1,2, \ldots, n}$;
        \item The sizes of $I$ and $J$ are large enough in the sense that $\card{I} = \Omega(s)$ and $\card{J} = \Omega(n)$.
    \end{enumerate}
    
    For every index $i \in \set{1,2, \ldots, s}$, we define the set $N(i) = \set{j \in \set{1,2, \ldots, s} : K(j) \cap K(i) \neq \emptyset}$. 
    
    \begin{claim}
        For every $i \in \set{1,2, \ldots, s}$, we have $\card{N(i)} \leq h^4$. 
    \end{claim}
    
    \begin{proof}
        Note that there are at most $h^2$ non-zero entries in each row or column of $Q$. 
        Therefore, $\card{K(i)} \leq h^2$. 
        For every $k \in K(i)$, there exist at most $h^2$ non-zero entries in the $k$-th row of $Q$. 
        Hence, $L(k) \coloneqq \card{\set{j \in \set{1,2, \ldots, s} : k \in K(j)}} \leq h^2$.
        Since $N(i) = \cup_{k \in K(i)} L(k)$, we get $\card{N(i)} \leq h^2 \times h^2 = h^4$.
    \end{proof}
    
    Next we define $I$ and $J$ iteratively. 
    Define $S_1 = \set{1,2, \ldots, s}$. 
    We take an arbitrary element $i_1 \in S_1$, and put the index $i_1$ into $I$. 
    We define $S_2 = S_1 \setminus N(i_1)$. 
    If $S_2 \neq \emptyset$, then we take an arbitrary element $i_2 \in S_2$, and put the index $i_2$ into $J$. 
    We define $S_3 = S_2 \setminus N(i_2)$. 
    If $S_3 \neq \emptyset$, then we take an arbitrary element $i_3 \in S_3$, and put the index $i_3$ into $I$. 
    We repeat the above process until $S_{t+1} = \emptyset$ for some positive integer $t$.
    Finally, we put $\set{s+1, s+2, \ldots, n}$ to $J$. 
    
    \begin{claim}
        We have $\card{I} \geq \ceil{\frac{1}{2}\ceil{\frac{s}{h^4}}} \geq \frac{s}{2h^4}$, and $\card{J} \geq \floor{\frac{1}{2}\ceil{\frac{s}{h^4}}} + n-s \geq \frac{s}{2h^4} + n - s - 1$.
    \end{claim}
    
    \begin{proof}
        Since $\card{N(i)} \leq h^4$ for every $i \in \set{1,2, \ldots, s}$, we have $S_t \neq \emptyset$ for $t = \ceil{\frac{s}{h^4}}$. 
        Hence, $\card{I} \geq \ceil{\frac{t}{2}}$, and $\card{J} \geq \floor{\frac{t}{2}} + n-s$. 
    \end{proof}
    
    Let $q_j, j \in \set{1,2, \ldots, n}$ be the $j$-th column of $Q$. 
    Let $q_{i,j}$ be the $i$-th row entry of $q_i$. 
    Let $b_{i,j} \coloneqq q_i^\top A q_j$ be the $(i,j)$-entry of $Q^\top A Q$. 
    
    \begin{proposition}
        It holds that
        \begin{align}
            \Pr(b_{i,j} \in \ZZ) \leq \hat{p}
        \end{align}
        for $i \in I, j \in J$. 
        And the $b_{i,j}$'s are independent random variables for $i \in I, j \in J$. 
    \end{proposition}
    
    \begin{proof}
        Note that
        \begin{align}
            b_{i, j} &= (Q^\top A Q)_{i, j} \\
                &= \sum_{1 \leq u, v \leq n} q_{u,i} a_{u, v} q_{v, j} \\
                &= \sum_{u \in K(i), v \in K(j)} a_{u, v} q_{u,i} q_{v, j}. 
        \end{align}
        So $b_{i, j}$ depends only on the variables $a_{u, v}$ for $(u, v) \in K(i) \times K(j)$. 
        Since $K(i) \times K(j), i \in I, j \in J$ are disjoint subsets of $\set{1,2, \ldots, s} \times \set{1,2, \ldots, n}$ and $K(i) \cap K(j) = \emptyset$ for $i, j \in I \cup J$ with $i \neq j$, we have that $b_{i,j}$ are independent for $i \in I, j \in J$. 
    
        Let $X = (x_{u, v})_{u \in K(i), v \in K(j)}$ be a $(0, 1)$-array with indices $K(i) \times K(j)$. 
        We define an involution on all such arrays.  
        Fix $u_0 \in K(i), v_0 \in K(j)$. 
        Define $X' = (x_{u, v}')_{u \in K(i), v \in K(j)}$, where
        \begin{align}
            x_{u, v}' = 
            \begin{cases}
                x_{u, v}, & (u, v) \neq (u_0, v_0), \\
                1-x_{u, v}, & (u, v) = (u_0, v_0).
            \end{cases}
        \end{align}
        Note that $0 < q_{u_0, i} < 1$ and $0 < q_{v_0, j} \leq 1$. 
        So if $b_{i, j} \in \ZZ$ for $A|_{K(i) \times K(j)} = X$, then $b_{i, j} \notin \ZZ$ for $A|_{K(i) \times K(j)} = X'$. 
        Namely,
        \begin{align}
            \Pr(b_{i,j} \in \ZZ \mid a_{u, v} = x_{u, v}, (u, v) \neq (u_0, v_0)) \leq \max(p, 1-p) = \hat{p}. 
        \end{align}
        By total probability theorem we get
        \begin{align}
            \Pr(b_{i,j} \in \ZZ) \leq \hat{p}. 
        \end{align}
    \end{proof}

    For large $n$, we have
    \begin{align}
        \Pr(Q^\top A Q \in M_n(\ZZ))
        &\leq \prod_{i \in I, j \in J} \Pr(b_{i, j} \in \ZZ) \\
        &\leq \hat{p}^{\card{I} \cdot \card{J}} \\
        &\leq \hat{p}^{\frac{s}{2 h^4} \cdot (\frac{s}{2 h^4} + n - s - 1)}.
    \end{align}
\end{proof}

Next we estimate the number of $Q$ in a canonical form. 

\begin{lemma}
    Let $h$ be a positive integer. 
    Then
    \begin{align}
        \card{\CAN(n, h; s)} \leq (2s h^2)^{s h^2}.
    \end{align}
\end{lemma}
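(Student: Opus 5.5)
Every $Q\in\CAN(n,h;s)$ equals $\diag(Q_s, I_{n-s})$, so $Q$ is determined by the block $Q_s$, an $s\times s$ rational orthogonal matrix of height at most $h$. The plan is to count the possible $Q_s$ by encoding each column of $Q_s$ through its support positions and its entry values, and to bound the number of choices for each piece of data. The bound $(2sh^2)^{sh^2}$ should then come from giving each of roughly $sh^2$ nonzero entries at most $2sh^2$ choices.

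\textbf{Sparsity.} Each column $q$ of $Q_s$ has $\|q\|^2=1$. Every nonzero entry has the form $\pm a/b$ with $1\le b\le h$, so its square is at least $1/h^2$. Hence each column contains at most $h^2$ nonzero entries, which is exactly the observation already used in the proof of \cref{lem:bottle}. Consequently $Q_s$ has at most $sh^2$ nonzero entries in total.

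\textbf{Encoding one column.} For each column I would record at most $h^2$ pairs (row index, value), padding with dummy entries so that every column has exactly $h^2$ slots.
\begin{itemize}
\item The row index of a slot has at most $s$ choices, with one extra choice reserved for ``empty''.
\item For the value, every nonzero entry of $Q_s$ has absolute value in $(0,1]$ and denominator at most $h$. So it is $\pm a/b$ with $1\le a\le b\le h$, and a crude count gives at most $2h^2$ choices.
\end{itemize}
A slot therefore has at most $(s+1)\cdot 2h^2$ possibilities. Multiplying over slots gives roughly $(2(s+1)h^2)^{h^2}$ per column and $(2(s+1)h^2)^{sh^2}$ overall.

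\textbf{Removing the $+1$.} I want $s$ in place of $s+1$. I would first try folding the ``empty'' marker into the value alphabet: let an empty slot be recorded as any row index together with value $0$. There are not $2h^2$ nonzero values: counting reduced fractions $a/b\le 1$ with $b\le h$ gives at most $h(h+1)/2$, so with signs at most $h^2+h\le 2h^2-1$ for $h\ge 2$. Adding $0$ keeps the value alphabet within $2h^2$. Each slot then has at most $s\cdot 2h^2$ choices, so each column has at most $(2sh^2)^{h^2}$ encodings, and
\begin{align*}
\card{\CAN(n,h;s)}\le \paren*{(2sh^2)^{h^2}}^{s}=(2sh^2)^{sh^2}.
\end{align*}
Since the encoding is surjective onto the set of possible $Q_s$, this bounds the count.

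The case $h=1$ needs a separate remark. There $Q_s$ is a signed permutation matrix whose entries are all $\pm1$, which are excluded as ``fractions'', so the set is empty or trivial for $s\ge1$ and the bound holds anyway.

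The main obstacle is this constant-level bookkeeping: making the value alphabet, including the zero/empty symbol, fit within $2h^2$ so that the count comes out as $2sh^2$ rather than $2(s+1)h^2$. The structural content, namely at most $h^2$ nonzero entries per column, is immediate.
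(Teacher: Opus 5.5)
Your proof is correct and follows essentially the same route as the paper. Both arguments use the unit-norm/denominator bound to get at most $h^2$ nonzero entries per column, a value alphabet of size at most $2h^2$, hence at most $(2sh^2)^{h^2}$ choices per column and $(2sh^2)^{sh^2}$ in total. Your slot encoding with zero-valued padding also handles cleanly what the paper's factor $\binom{s}{h^2}$ glosses over, namely columns with fewer than $h^2$ nonzero entries and the case $s<h^2$, and your separate remark covers $h=1$, where the alphabet bound fails.
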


\begin{proof}
    Let $Q$ be a canonical form in $\CAN(n, h; s)$. 
    For each non-zero entry in $Q$ other than $\pm 1$, it takes value in $\pm \frac{p}{q}$ with $0 < \abs{p} < q \leq h$. 
    There are $2 + \sum_{q=2}^h 2q \leq h^2 + h$ possibilities. 
    Since the sum of squares of entries in each column is $1$ and each non-zero entry contributes at least $1/h^2$, there are at most $h^2$ non-zero entries in each row. 
    The number of feasible column vector in the first $s$ columns is at most $\binom{s}{h^2} (h^2 + h)^{h^2} \leq (2sh^2)^{h^2}$. 
    Since the first $s$ columns determine the canonical form, we have $\card{\CAN(n, h; s)} \leq (2s h^2)^{s h^2}$. 
\end{proof}

For $A \in S_n(\ZZ)$, define 
\begin{align}
    \cQ(A) \coloneqq \set{Q \in O_n(\QQ) \mid Q^\top A Q \in M_n(\ZZ)}.
\end{align}

\begin{observation}
    Let $A \in S_n(\ZZ)$.
    Suppose $Q \in \cQ(A)$. 
    Then $QP \in \cQ(A)$ for every signed permutation matrix $P$ of order $n$. 
\end{observation}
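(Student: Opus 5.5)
The plan is to verify the two conditions defining $\cQ(A)$ separately for $QP$: first that $QP$ is a rational orthogonal matrix, and second that $(QP)^\top A (QP)$ is an integral matrix. Both reduce to the fact that a signed permutation matrix $P$ is an integral orthogonal matrix.

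For the first condition, recall that a signed permutation matrix $P$ has exactly one non-zero entry in each row and each column, and that entry is $\pm 1$. Hence $P$ has integer entries and satisfies $P^\top P = I_n$. It follows that $QP$ is rational, since it is a product of rational matrices. It is also orthogonal, since
\begin{align}
    (QP)^\top (QP) = P^\top Q^\top Q P = P^\top P = I_n .
\end{align}
Thus $QP \in O_n(\QQ)$.

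For the second condition, write $B \coloneqq Q^\top A Q$. By hypothesis $B \in M_n(\ZZ)$. Then
\begin{align}
    (QP)^\top A (QP) = P^\top (Q^\top A Q) P = P^\top B P .
\end{align}
The matrix $P^\top B P$ is a product of three integral matrices, so it is integral. Concretely, if $P$ sends $e_j$ to $\epsilon_j e_{\sigma(j)}$, then $(P^\top B P)_{i,j} = \epsilon_i \epsilon_j b_{\sigma(i),\sigma(j)}$. In other words, the entries of $P^\top B P$ are exactly the entries of $B$, rearranged and possibly negated. Hence $QP \in \cQ(A)$.

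There is no real obstacle here; the only point needing care is using both the integrality and the orthogonality of $P$. This observation is what allows the later counting to be restricted to canonical forms. Since $P_R^\top Q P_C$ is canonical, the same argument, applied on the left via the graph relabelling $P_R^\top A P_R$, will be used to pass from arbitrary $Q$ of height $h$ to elements of $\CAN(n,h;s)$.
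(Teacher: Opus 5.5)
Your proof is correct and is the routine verification the paper has in mind: the paper states this observation without proof, and it rests on exactly your point that $P$ is an integral orthogonal matrix, so $QP\in O_n(\QQ)$ and $(QP)^\top A(QP)=P^\top(Q^\top AQ)P$ is integral. Your closing remark about the left factor lies outside this statement. There $A$ itself is replaced by $P_R^\top A P_R$, so that step needs $P_R$ to be an unsigned permutation, which guarantees $P_R^\top A P_R$ is again distributed as $\cG(n,p)$.
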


For $Q \in O_n(\QQ)$, we denote by $\FRI(Q)$ ($\FCI(Q)$) the index set of rows (columns) with fractional numbers, and by $\IRI(Q)$ ($\ICI(Q)$) the index set of rows (columns) with only integral numbers (namely $0$ and $\pm 1$). 

\begin{observation}
    Let $A \in S_n(\ZZ)$ and $Q \in \cQ(A)$. 
    Suppose $P_R$ and $P_C$ are permutation matrices mapping $\FRI(Q)$ and $\FCI(Q)$ to $\set{1,2, \ldots, s}$ respectively, and $P_R^\top Q P_C \in \sCAN(n, h; s)$. 
    Then there exists $Q' \in \cQ(A)$ such that $P_R^\top Q' P_C \in \CAN(n, h; s)$. 
\end{observation}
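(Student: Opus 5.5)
Write $Q_0 = P_R^\top Q P_C = \diag(Q_s, P_{n-s})$ with $Q_s$ of order $s$ and $P_{n-s}$ a signed permutation matrix. The plan is to multiply $Q$ on the right by a suitable signed permutation matrix $P$, which keeps $QP$ in $\cQ(A)$ by the first Observation, and to choose $P$ so that the conjugated matrix has identity lower block.

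\emph{Choice of $P$.} I would look for $Q' = QP$ with
\begin{align}
    P_R^\top Q' P_C = \diag(Q_s, I_{n-s}).
\end{align}
Since $Q' = QP$, we have $P_R^\top Q' P_C = Q_0 \cdot (P_C^\top P P_C)$. Requiring this to equal $\diag(Q_s, I_{n-s})$ forces
\begin{align}
    P_C^\top P P_C = Q_0^{-1}\diag(Q_s, I_{n-s}) = \diag(I_s, P_{n-s}^\top),
\end{align}
so I set
\begin{align}
    P \coloneqq P_C \diag(I_s, P_{n-s}^\top) P_C^\top.
\end{align}

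\emph{Verification.} $P$ is a product of signed permutation matrices (the transpose of a signed permutation matrix is again one), so $P$ is a signed permutation matrix of order $n$. The first Observation then gives $Q' = QP \in \cQ(A)$. Substituting back, $P_R^\top Q' P_C = Q_0 \diag(I_s, P_{n-s}^\top) = \diag(Q_s, I_{n-s})$. It remains to check that this matrix lies in $\CAN(n,h;s)$. It is a product of rational orthogonal matrices, hence orthogonal. Its upper block $Q_s$ is unchanged, so its entries are still zero or fractions and the height is still $h$.

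\emph{Main point of care.} The only thing to watch is the side on which the signed permutation acts. The first Observation allows only right multiplication, $Q \mapsto QP$, because $(QP)^\top A (QP) = P^\top (Q^\top A Q) P$ stays integral. The correction therefore has to be made on the column side, which is why $P$ is conjugated by $P_C$. The hypothesis that $P_R$ and $P_C$ send $\FRI(Q)$ and $\FCI(Q)$ to $\set{1,2,\ldots,s}$ is only needed so that the block structure $\diag(Q_s, P_{n-s})$ exists; after that the argument is pure algebra.
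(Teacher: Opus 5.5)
Your proof is correct and follows the intended argument. The paper states this Observation without proof, relying on the preceding Observation that right multiplication by a signed permutation preserves $\cQ(A)$, and your explicit choice $P = P_C \diag(I_s, P_{n-s}^\top) P_C^\top$, together with your remark that only right multiplication is allowed, fills in exactly that step.
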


Note that the number of choices for the pair of permutation matrices $P_R$ and $P_C$ is at most $(n (n-1) \cdots (n-s+1))^2 \leq n^{2s}$. 

We are prepared to prove~\cref{thm:fixed-height}.

\begin{proof}[{Proof of~\cref{thm:fixed-height}}]
    For sufficiently large $n$, we have
    \begin{align*}
        &\Pr(G \text{ has a cospectral mate with height } h) \\
        \leq&\Pr(\exists Q \in O_{n, h}(\QQ) : 2 \leq h, Q^\top A Q \in M_n(\ZZ)) \\
        \leq& \sum_{s = 2}^n \sum_{Q \in \CAN(n, h; s)} n^{2s} \Pr(Q^\top A Q \in M_n(\ZZ)) \\
        \leq& \sum_{s = 2}^n n^{2s} (2s h^2)^{s h^2} \hat{p}^{\frac{s}{2 h^4} \cdot (\frac{s}{2 h^4} + n - s - 1)} \\
        \leq& \sum_{s = 2}^n n^{2s} (2s h^2)^{s h^2} \hat{p}^{\frac{s}{2 h^4} \cdot \frac{n-1}{2 h^4}} \\
        \leq& \sum_{s = 2}^n \exp(2s \ln n + s h^2 \ln (2s h^2) + \frac{s(n-1)}{4h^8} \ln \hat{p}).
    \end{align*}
    Set $\lambda \coloneqq - \ln \hat{p} > 0$ and $a_{n, s} \coloneqq \exp\paren*{2s \ln n + s h^2 \ln (2s h^2) - \frac{\lambda s(n-1)}{4h^8}}$. 
    We will show that $\sum\limits_{s = 2}^n a_{n, s} \to 0$ as $n \to \infty$. 
    Note that for $2 \leq s \leq n$, it holds that
    \begin{align*}
        \ln a_{n, s} &\leq 2s \ln n + s h^2 \ln (2n h^2) - \frac{\lambda s(n-1)}{4 h^8} \\
        &\leq s \paren*{(2+h^2) \ln n + h^2 \ln (2 h^2) - \frac{\lambda (n-1)}{4 h^8}} \eqqcolon s b_n.
    \end{align*}
    Since $\ln n = o(n)$, we have 
    \begin{align*}
        (2+h^2) \ln n + h^2 \ln (2 h^2) \leq \frac{\lambda n}{8 h^8}
    \end{align*}
    for sufficiently large $n$, say $n \geq N_0$. 
    Now suppose $n \geq N_0$, we have
    \begin{align*}
        b_n \leq \frac{\lambda n}{8 h^8} - \frac{\lambda (n-1)}{4 h^8} \leq - \frac{\lambda n}{8 h^8}.
    \end{align*}
    Hence
    \begin{align*}
        \sum_{s = 2}^n a_{n, s} &\leq \sum_{s = 2}^\infty \exp\paren*{- \frac{\lambda n}{8 h^8} s} = \frac{r_n^2}{1 - r_n} \to 0,
    \end{align*}
    as $n \to \infty$, where $r_n = \exp\paren*{- \frac{\lambda n}{8 h^8}} < 1$. 
\end{proof}

It is not hard to generalize the above proof to the case of growing $h$. 

\begin{proof}[{Proof of~\cref{thm:growing-height}}]
    We keep the notation in the proof of~\cref{thm:fixed-height}. 
    Since $h = o(\paren*{n/\ln n}^{1/10})$, we have
    \begin{align}
        b_n \leq - C h^2 \ln n
    \end{align}
    for some $C > 0$. 
    Therefore
    \begin{align}
        \sum_{s=2}^n a_{n, s} \leq \sum_{s=2}^n \exp\paren*{- C s h^2 \ln n} \leq \frac{\exp\paren*{- 2C h^2 \ln n}}{1-\exp\paren*{- C h^2 \ln n}} \to 0
    \end{align}
    since $\exp\paren*{- 2C h^2 \ln n} \to 0$ as $n \to \infty$.
\end{proof}

\section{Discussion}

We exhaust all graphs of order at most $10$, and investigate the maximum level/height such that there exists a controllable graph with generalized cospectral mate of given level/height. 
The result in summarized in~\cref{tab:max_level_height}, where NaN means that there exists no such controllable graph. 

\begin{table}[htbp]
    \centering
    \begin{tabular}{ccccccccccc}
        \hline
        $n$ & 1 & 2 & 3 & 4 & 5 & 6 & 7 & 8 & 9 & 10 \\
        \hline
        $\ell_{\text{max}}$ & NaN & NaN & NaN & NaN & NaN & NaN & NaN & 3 & 37 & 253 \\
        \hline
        $h_{\text{max}}$ & NaN & NaN & NaN & NaN & NaN & NaN & NaN & 3 & 37 & 253 \\
        \hline
    \end{tabular}
    \caption{The maximum level/height such that there exists a controllable graph of order $n$ with generalized cospectral mate of given level/height.}
    \label{tab:max_level_height}
\end{table}

It seems that the maximum level and maximum height coincide for small controllable graphs. 
The author believes that they should diverge as the order of the graph grows. 
The data suggest that we need other methods to handle cospectral mates with large level/height. 
The author believes that tools in number theory will be useful.
It is tentative to bound the number of (canonical) rational orthogonal matrix of given height by the following theorem. 
However, the constant term, which the author is aware of, is relatively large. 

\begin{theorem}[{\cite[Theorem 0.1]{salberger2023CountingRationalPoints}}]
    Let $X \subset \PP^n$ be an integral projective variety of degree $d \geq 2$ defined over $\QQ$. 
    Then for every $\epsilon > 0$, the number of rational points on $X$ with bounded height $h$ is controlled by $O_{X, \epsilon}(h^{\dim X + \epsilon})$.
\end{theorem}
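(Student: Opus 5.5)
The plan is the determinant method combined with induction on $m=\dim X$, aiming at the slightly stronger uniform form: the number of rational points of height $\le h$ is $O_{d,n,\epsilon}(h^{m+\epsilon})$ with the implied constant depending only on $n,d,\epsilon$. Uniformity is what lets the induction pass to subvarieties over which we have no control. This is the ``dimension growth'' setting of Heath-Brown, Browning--Heath-Brown and Salberger. The trivial bound, from a generic projection onto $\PP^m$, is $O(h^{m+1})$, so the whole point is to save one power of $h$ using $d\ge 2$.

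\textbf{Reductions.} First, if $X$ is not geometrically integral, then its rational points lie in the intersection of $X$ with a Galois conjugate of one of its geometric components. That intersection has dimension $\le m-1$, so the trivial bound already gives $O(h^{m})$. Assume from now on that $X$ is geometrically integral. Next, apply a generic linear projection $\pi:\PP^n\dashrightarrow\PP^{m+1}$ whose integer coefficients are bounded in terms of $n,d$ only. This maps $X$ finitely and birationally onto a geometrically integral hypersurface $Y=\{F=0\}$ of degree $d$, and heights grow by at most a constant factor. Two further contributions are easy. Points of $X$ on the exceptional locus of $\pi$ (the centre of projection and the non-isomorphism locus) lie on a proper closed subset of bounded degree, so they are handled by the induction hypothesis or the trivial bound. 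Every other point of $Y$ has at most $d$ preimages. It therefore suffices to prove the bound for hypersurfaces $Y\subset\PP^{m+1}$.

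\textbf{Base case $m=1$.} Here we use the uniform Bombieri--Pila/Heath-Brown bound: an integral curve of degree $d$ has $O_{d,\epsilon}(h^{2/d+\epsilon})$ rational points of height $\le h$, which is $\le h^{1+\epsilon}$ when $d\ge2$. This is proved by Heath-Brown's $p$-adic determinant method. Fix primes $p$ of size about $h^{c}$ and work with the residue classes of points modulo $p$. One shows that all points of height $\le h$ in a fixed residue class lie on a single auxiliary curve of degree $O(1)$ not containing $X$. Bézout then bounds the intersection by $O(1)$ points per class. Summing over the $O(p)$ nonsingular residue classes, plus the singular ones handled separately, gives the estimate.

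\textbf{Inductive step for $m\ge 2$.} Apply the same determinant method to $Y$. The goal is to show that all points of height $\le h$ lie on $O(h^{(m+1)/d^{1/m}+\epsilon})$ auxiliary hypersurfaces, each of degree $O_{d,\epsilon}(1)$ and none containing $Y$. Each intersection $Y\cap\{G=0\}$ has dimension $m-1$ and bounded degree. Its components of degree $\ge2$ are handled by the induction hypothesis, giving $h^{m-1+\epsilon}$ points each. When $d$ is large, the number of auxiliary hypersurfaces is $\le h^{1+\epsilon}$ and the total is $h^{m+\epsilon}$.

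\textbf{Main obstacle.} Two difficulties remain, and they form the core of the argument. The first is small $d$, for example quadrics and cubics, where the count of auxiliary hypersurfaces exceeds $h$. The second is components of degree $1$, that is, linear spaces $\Lambda\subset Y$ of dimension $m-1$. Each such $\Lambda$ carries $\asymp h^{m}$ points, and summing naively over many slices would overshoot. To handle them, I would first bound the number of linear subspaces of dimension $m-1$ on a non-linear integral hypersurface by a constant depending only on $d$ and $m$, using a Fano-scheme or Bézout argument. Their total contribution is then $O(h^{m})$, and those points are removed before slicing. For small $d$, I would slice instead by the hyperplanes $x_{m+1}=\lambda x_0$ with $\lambda$ of height $\le h$. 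By Bertini/Hilbert irreducibility, most slices are integral of degree $d\ge2$ and are handled by the induction hypothesis. The exceptional $\lambda$ are few, and their slices are again controlled through the linear-subspace count. Keeping every constant uniform in $Y$ through these steps is the delicate part, and it is exactly what Salberger's refinement of the determinant method supplies.
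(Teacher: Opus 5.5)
The paper gives no proof of this statement; it is quoted from Salberger. Your sketch therefore has to stand on its own, and its inductive step has genuine gaps.

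First, your claim that a non-linear integral hypersurface $Y\subset\PP^{m+1}$ contains only $O_{d,m}(1)$ linear subspaces of dimension $m-1$ is false. The quadric $x_0x_1=x_2x_3$ in $\PP^{m+1}$ ($m\ge 2$) contains the $(m-1)$-planes $\{x_0=\lambda x_2,\ x_3=\lambda x_1\}$ for every $\lambda\in\QQ$. In every degree $d\ge 2$ there are also ruled surfaces in $\PP^3$, e.g.\ $x_0^{d-1}x_2=x_1^{d-1}x_3$, containing the rational lines $\{tx_0=sx_1,\ s^{d-1}x_2=t^{d-1}x_3\}$ for all $(s:t)$. On the split quadric surface every rational point lies on such a line, and there are $\asymp h^2\log h$ points of height $\le h$. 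This is exactly why the $\epsilon$ is needed in the statement. So points on $(m-1)$-planes cannot be ``removed before slicing'' at total cost $O(h^m)$. Since $Y\cap\{G=0\}$ may have such components, this also breaks your large-$d$ branch. Ruled varieties need a separate argument, for instance counting the linear spaces by their own height, combined with a Segre-type bound for the number of lines on non-ruled surfaces.

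Second, your slicing fallback saves nothing. There are $\asymp h^2$ values $\lambda=a/b$ with $H(\lambda)\le h$, so feeding $O(h^{m-1+\epsilon})$ into each slice returns $O(h^{m+1+\epsilon})$, which is the trivial bound. For $m=2$, $d=3$ the cubic-curve bound even gives $h^{8/3+\epsilon}$. To gain, you would have to use that a point on $bx_{m+1}=ax_0$ has $x_0=bt$, $x_{m+1}=at$ with $|t|\le h/\max(|a|,|b|)$, which requires counts in lopsided boxes. The standard route instead passes to the affine cone, slices by the $O(h)$ hyperplanes $x_{m+1}=c$, and proves an \emph{affine} bound $\ll h^{\dim-1+\epsilon}$ for integral points on the slices. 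That affine statement does not follow from your projective induction hypothesis, and proving it is where the real work lies.

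The problem is not confined to small $d$. Your determinant-method branch needs $(m+1)/d^{1/m}\le 1$, i.e.\ $d\ge (m+1)^m$, which fails for every fixed $d$ once $m$ is large.

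Finally, your uniform target $O_{d,n,\epsilon}$ is stronger than the statement. For cubic varieties Salberger's constant depends on $X$ (hence the $O_{X,\epsilon}$), and to my knowledge uniform dimension growth for cubics is not known. So your closing claim that his refinement ``exactly supplies'' the uniformity is wrong for $d=3$.
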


For another approach, consider the set of $n \times n$ square matrices $M_n$ over a field or a ring. 
The ordinary matrix multiplication and Hadamard product(also known as Schur product, entrywise product) are naturally defined on $M_n$. 
A natural graph matrix is obtained from the adjacency matrix by a fixed sequence of operations: (1) linear combinations, (2) ordinary matrix multiplication $\cdot$, (3) Hadamard product $\circ$. 
The identity matrix $I$ and the all-one matrix $J$ are the units of these two products.

Xiang~\cite{xiang2026NaturalGraphSpectra} proved the following theorem.

\begin{theorem}[{\cite[Theorem 1.1]{xiang2026NaturalGraphSpectra}}]
    Let $n$ be a natural number. 
    There exists a natural graph spectrum (which may depend on $n$) such that for the Erdős--Rényi random graph $G(n, 1/2)$, the spectrum determines $G$ up to isomorphism asymptotically almost surely.
\end{theorem}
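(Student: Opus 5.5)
The plan is to make the natural graph matrix \emph{diagonal}. The spectrum of a diagonal matrix is just the multiset of its diagonal entries, so it suffices to build, from $A$ by linear combinations, ordinary products and Hadamard products, a diagonal matrix whose diagonal entry at $v$ encodes both a canonical name of $v$ and the names of all neighbours of $v$. Since the construction may depend on $n$, we may freely use polynomials whose degrees and coefficients depend on $n$. In particular, we can use Lagrange interpolation on any finite set of integers of size $\mathrm{poly}(n)$ to realise an arbitrary function on that set as a polynomial.

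\textbf{Step 1: canonical labels.} By the theorem of Babai--Erdős--Selkow, for $G\sim G(n,1/2)$ a.a.s.\ a two-round degree refinement assigns pairwise distinct labels to all vertices. Here the first round is the degree, and the second round is, e.g., the sum over neighbours of a suitable injective encoding of their degrees. This labelling is an isomorphism invariant. We realise it as follows.
\begin{itemize}
\item The degree matrix is $D = I\circ (AJ)$.
\item For a polynomial $f$ chosen by interpolation on $\{0,\dots,n-1\}$ so that $f(d)=(n+1)^{d}$, the matrix $f(D)$ is diagonal (built with ordinary products and $I$). Then $I\circ (A f(D) J)$ is diagonal with entry $\sum_{u\sim v}(n+1)^{\deg u}$, which encodes the multiset of neighbour degrees.
\item Combining this with $D$ linearly, for instance $X = c_n D + I\circ(Af(D)J)$ with $c_n$ large, gives a diagonal matrix $X=\diag(x_v)$. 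Its integer entries lie in a range of size $\exp(O(n\log n))$, which is still finite and depends only on $n$.
\end{itemize}
A.a.s.\ the $x_v$ are pairwise distinct.

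\textbf{Step 2: encoding adjacency.} Rank the possible values of $x$ by a fixed ordering. Choose by interpolation a polynomial $g$ with $g(x)=2^{r(x)}$ for a fixed injective map $r$ from the (finite) value set into $\{0,\dots,R\}$. Then
\[
Z = I\circ \bigl(A\, g(X)\, J\bigr)
\]
is diagonal with entry $z_v=\sum_{u\sim v}2^{r(x_u)}$. The binary expansion of $z_v$ lists exactly the labels of the neighbours of $v$. Finally set $M = 2^{R+1} X + Z$, which is a natural graph matrix. Its spectrum is the multiset $\{2^{R+1}x_v + z_v\}$. Each element decodes uniquely to the pair $(x_v, \text{labels of } N(v))$. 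When the labels are distinct, this data reconstructs $G$ with vertex set the labels, so any $H$ with the same spectrum is isomorphic to $G$.

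\textbf{Main obstacle.} The main obstacle is Step 1, namely ensuring the labels are genuinely distinct a.a.s. This requires citing or reproving the Babai--Erdős--Selkow canonical labelling result in a form expressible by these operations: degrees plus one neighbourhood-aggregation round. If the plain degree sum does not separate all vertices, I would first try aggregating with the injective weights $(n+1)^{\deg u}$ as above, which records full neighbour-degree multisets. BES show that these multisets separate vertices a.a.s. A minor secondary point is bookkeeping that interpolation on exponentially large but finite sets is legitimate, since the spectrum is allowed to depend on $n$.
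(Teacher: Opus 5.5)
The paper gives no proof of this statement; it quotes it from Xiang, so there is no in-paper argument to compare with and your proposal has to stand on its own. The strategy is sound. $I\circ(\cdot)$ extracts diagonals, polynomials in a diagonal matrix realise arbitrary functions of its entries by interpolation, and the spectrum of a diagonal matrix is the multiset of its entries. So you can push two rounds of colour refinement, plus the adjacency lists, into one diagonal matrix. Two points need repair.

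\textbf{The final step has a gap.} You decode each eigenvalue as $2^{R+1}x+z$ with $0\le z<2^{R+1}$. That bound on $z$ holds only for graphs whose labels are pairwise distinct. For the competitor $H$ you know nothing about its labels a priori. If several vertices of $H$ share a label, then $z^H_w=\sum_{u\sim_H w}2^{r(x^H_u)}$ can reach $(n-1)2^R$ and carry into the $X$-part. Equality of the multisets $\{2^{R+1}x^H_w+z^H_w\}$ and $\{2^{R+1}x^G_v+z^G_v\}$ then no longer yields matching pairs $(x,z)$. So ``any $H$ with the same spectrum is isomorphic to $G$'' is not justified as written.

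The fix is to replace $2^{R+1}$ by a multiplier $B>(n-1)2^R$, e.g.\ $B=n2^R$. Then every diagonal entry of every graph on $n$ vertices decodes correctly. Equal spectra give a bijection $\pi\colon V(H)\to V(G)$ with $x^H_w=x^G_{\pi(w)}$ and $z^H_w=z^G_{\pi(w)}$. Since the labels of $G$ are distinct, so are those of $H$. The binary digits of $z^H_w$ are then exactly the positions $r(x^G_{\pi(u)})$ for $u\sim_H w$. Comparing with $z^G_{\pi(w)}$ gives $\pi(N_H(w))=N_G(\pi(w))$, so $\pi$ is an isomorphism.

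\textbf{The BES citation needs one more step.} Babai--Erd\H{o}s--Selkow do not literally prove that neighbour-degree multisets separate vertices. They prove that a.a.s.\ the $r=O(\log n)$ vertices $v_1,\dots,v_r$ of largest degree satisfy $d(v_1)>\dots>d(v_r)>d(v_{r+1})$, and that all other vertices have pairwise distinct neighbourhoods in $\{v_1,\dots,v_r\}$. You need the following short reduction:
\begin{itemize}
\item Each value $d(v_i)$ occurs as a degree only at $v_i$.
\item Hence, for $v\neq v_i$, the multiplicity of $d(v_i)$ in the neighbour-degree multiset of $v$ is $1$ if $v\sim v_i$ and $0$ otherwise.
\item Degree alone separates each $v_i$ from every other vertex.
\end{itemize}
So your round-two label refines the BES invariant and is a.a.s.\ injective.

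With these two repairs your argument is a complete, self-contained proof. One further observation: your $f_n(A)$ is diagonal for every graph, hence real symmetric, which bears on the paper's problem asking for symmetric $f_n$. The interpolating polynomial $g$, however, has degree $\exp(O(n\log n))$.
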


In other words, there exists a sequence of functions $f_n$ in the double algebra of square matrices such that the spectrum of $f_n(A)$ determines almost all graphs, where $A$ is the adjacency matrix of the graph. 
This leads to the following problems for future research.

\begin{problem}
    How to construct $f_n$ explicitly? How to compute it efficiently?
\end{problem}

\begin{problem}
    How to reduce the `degree' of $f_n$?
\end{problem}

One feasible definition of `degree' is the iteration depths of operations. 

\begin{problem}
    Could we choose $f_n$ such that $f_n(A)$ is always real symmetric?
\end{problem}

It might works by taking $f_n(A) + f_n(A)^\top$. 

\begin{problem}
    Could we find a fixed function $f$ independent of $n$? The Haemers conjecture for adjacency matrix, Laplacian matrix, and generalized adjacency matrix is equivalent to claim $f(x) = x$, $f(x) = (x \cdot x) \circ I - x$, and $f(x) = s_1 x + s_2  (J-I-x)$ respectively. 
\end{problem}

\section*{Declaration of competing interest}

The author declares no known competing financial interests or personal relationships that could have appeared to influence the work reported in this paper.



\section*{Acknowledgements}
The author would like to thank Willem Haemers, Guoyou Qian (千国有), Edwin van Dam, Alexander Van Werde, Wei Wang (王卫), and Wei Wang (王伟) for helpful discussions and comments. 
Da ZHAO (赵达) was supported in part by the National Natural Science Foundation of China (No. 12471324, No. 12501459, No. 12571353), and the Natural Science Foundation of Shanghai, Shanghai Sailing Program (No. 24YF2709000). 

\bibliographystyle{alphaurl}
\bibliography{ref}

\end{document}